\DeclareMathOperator{\cor}{cor}
\theoremstyle{plain}
\newtheorem{theorem}{Theorem}
\newtheorem{lemma}[theorem]{Lemma}
\newtheorem{corollary}[theorem]{Corollary}
\newcommand{\setmid}{\,|\,}
\newcommand{\N}{{\mathbb{N}}}
\newcommand{\R}{{\mathbb{R}}}
\newcommand{\lf}{\left\lfloor}
\newcommand{\rf}{\right\rfloor}
\newcommand{\lc}{\left\lceil}
\newcommand{\rc}{\right\rceil}
\newcommand{\la}{\left|}
\newcommand{\ra}{\right|}
\newcommand{\mmod}[1]{\,\,(\mbox{mod}\,\,#1)}
\begin{document}

\title{Winkler's Hat Guessing Game: Better Results for Imbalanced Hat Distributions\thanks{This is an old result of mine never published except in my Habilitation thesis.}}

\author{Benjamin Doerr\\ Max-Planck-Institute for Informatics\\ 66123 Saarbr\"ucken\\ Germany}

\maketitle

\begin{abstract}
  In this note, we give an explicit polynomial-time executable strategy for Peter Winkler's hat guessing game that gives superior results if the distribution of hats is imbalanced. While Winkler's strategy guarantees in any case that $\lfloor n/2 \rfloor$ of the $n$ player guess their hat color correct, our strategy ensures that the players produce $\max\{r,b\} - 1.2 n^{2/3} -2$ correct guesses for any distribution of $r$ red and $b = n - r$ blue hats. We also show that any strategy ensuring $\max\{r,b\} - f(n)$ correct guesses necessarily has $f(n) = \Omega(\sqrt n)$.  
\end{abstract}

\section{The Hat Color Guessing Game}

In this note, we deal with the following game suggested by Peter Winkler~\cite{winklergames}. In the \emph{simultaneous hat guessing game}, there are $n$~players each wearing a red or blue hat. Each player can see all hats
  except his own. Simultaneously, the players have to guess the color of their
  own hat.  No communication is allowed during the game. The players may,
  however, discuss their strategy before they get to see the hats.

It is easy to see from the rules that no player can make sure that he
guesses his hat color, no matter what strategy the players agree on.
Thus the following result in~\cite{winklergames} is quite surprising.
There is a strategy that \emph{guarantees} that $\lf n/2 \rf$ players
guess their hat color correctly. In fact, the strategy is not too
difficult, and the interested reader is encouraged to stop reading now
and try to find such a strategy on his own.

A drawback of this strategy is
that is also ensures that not more than $\lc n/2 \rc$ players guess correctly.
This is particularly annoying since there is a strategy (and it is probably the
first one most people think of) that seems to gain an advantage from the fact
that there are more hats in one color than the other.

Assume for simplicity that $n$ is even and that there are $r$ red hats and $b = n-r$ blue ones. The \emph{majority strategy}\/ is
for each player to guess that color which he can see more hats in. If there are
more red hats than blue ones, the assumption that $n$ is even ensures that the
difference is at least $2$. Thus all players can see more red hats than blue
ones. Using the majority strategy, all players wearing a red hat guess right.
Hence this strategy is superior, leading to $\max\{r,b\}$ correct guesses, if the distribution of hats is imbalanced.
Unfortunately, the majority strategy fails badly if there are as many red as
blue hats. In this case, all player can see more hats in the color they are not
wearing. Hence \emph{all}\/ players guess wrong.

In this note, we are looking for strategies that combine advantages  of
the 50\%--strategy and the majority strategy. We present an explicit strategy that
produces more correct guesses if the distribution of hats is imbalanced, but
ensures that at least nearly 50\% of the players guess right in any case. A
probabilistic argument shows that no strategy can ensure the better outcome of
the majority and the 50\%--strategy in all cases. Thus to exploit imbalanced
distributions, one has to pay a price in the sense that less than half of the
players guess right for balanced partitions. But this price can be kept small:
Our strategy  produces $\max\{b,r\} - o(n)$ correct guesses on any
distribution of $r$ red and $b$ blue hats. More precisely, we show the following.

{\sloppy \begin{theorem}\label{thm}
  There is an explicit  strategy such that $n$ players surely produce $\max\{r,b\} - 1.2 n^{2/3} -2$ correct guesses for any distribution of $r$ red and $b = n - r$ blue hats. This strategy requires the players to do only elementary, polynomial-time computations.
\end{theorem}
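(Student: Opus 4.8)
The plan is to pass to the signed formulation. Put $\varepsilon_i=+1$ if player $i$ wears red and $-1$ if blue, and let $x_i=\pm1$ encode the guess; then the number of correct guesses is $\tfrac12\bigl(n+\sum_i x_i\varepsilon_i\bigr)$, while $\max\{r,b\}=\tfrac12(n+|d|)$ with $d=r-b=\sum_i\varepsilon_i$. So it suffices to produce guesses with $\sum_i x_i\varepsilon_i\ge |d|-2.4\,n^{2/3}-4$ for every colouring. The only datum available to player $i$ is $v_i:=d-\varepsilon_i\in\{d-1,d+1\}$, i.e. everyone knows $d$ up to the sign of his own hat. First I would record the fact that drives everything: the plain majority rule $x_i=\sgn v_i$ already gives $\sum_i x_i\varepsilon_i=|d|$ as soon as $|d|\ge2$, and only at the balanced point does it collapse (to $-n$), because there the red and the blue players see opposite signs and each guesses his minority colour. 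Thus the entire loss must be extracted from a narrow band of small $|d|$.

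Next I would analyse the "fuzzy threshold" relaxation, which pins down the right order of magnitude. If each player guessed red with probability $p(v_i)$, where $p$ rises linearly from $0$ to $1$ across a window $[-W,W]$, then, since all red players see $d-1$ and all blue players see $d+1$, the expected score depends only on $(r,b)$, and a direct computation gives an expected deficit $\tfrac{n}{2W}+\tfrac{d(W-d)}{2W}$ on the window; optimising $W\asymp\sqrt n$ makes this $O(\sqrt n)$ uniformly in $d$, matching the claimed $\Omega(\sqrt n)$ lower bound. This tells me the correct deterministic object is a threshold rule in which the proportion of players switching from blue to red grows gradually, over a window of polynomial width, as the observed imbalance increases.

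I would then try to realise this by an explicit, identity-dependent \emph{staircase}: split the players into $\Theta(n^{1/3})$ blocks of $\Theta(n^{2/3})$ players, give block $t$ a threshold $\theta_t$ spread over an interval of length $\Theta(n^{1/3})$ around $0$, and let a player in block $t$ guess red iff $v_i\ge\theta_t$, with the block whose threshold currently straddles $d$ (where the sign is genuinely ambiguous) falling back on Winkler's pairing so that half of it is still correct. The intent is that blocks with $\theta_t$ safely below $d$ contribute all their red members, blocks safely above contribute all their blue members, and only the straddling block plus the $\Theta(n^{1/3})$ width of the window are lost; balancing the number of blocks against their size gives $O(n^{2/3})$, and tracking constants yields $1.2\,n^{2/3}+2$. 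Every player only counts hats, compares with a fixed threshold, and possibly consults one partner, so the computation is elementary and polynomial time.

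The step I expect to be genuinely hard — and the reason the deterministic bound is $n^{2/3}$ rather than $\sqrt n$ — is robustness against the adversarial \emph{assignment} of colours to players, since the guarantee must hold for the worst colouring realising a given $(r,b)$. A purely symmetric rule cannot work at all: a parity argument shows that any common threshold function makes everyone wrong at some imbalance, so identity must enter. But once it does, the adversary tries to align colours with offsets — for example concentrating the majority colour into the high-threshold blocks, so those players under-react and waste their votes, an attack that defeats the naive independent-threshold version outright. The core of the proof is therefore to engineer the coupling between levels (this is what the Winkler pairs are really for) so that the construction inherits pairing's insensitivity to the within-block assignment and the damage at every $d$ is confined to a single block. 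Establishing this robustness uniformly in $d$, and verifying that the coarse $\Theta(n^{1/3})$ discretisation it forces is exactly what costs the extra $n^{1/6}$ factor over the information-theoretic $\sqrt n$, is where I expect the bulk of the technical work, and the real difficulty, to lie.
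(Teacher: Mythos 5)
Your proposal gets the architecture right (blocks of size $\Theta(n^{2/3})$, per-block thresholds, pairing as a fallback, and the $n^{1/3}$-versus-$n^{2/3}$ balancing), and you correctly diagnose that a fixed-threshold staircase is destroyed by an adversary who stacks colours block by block. But the proposal stops exactly where the proof has to happen: you never exhibit a mechanism that confines the damage to a single block, you only assert that ``coupling between levels via Winkler pairs'' should do it, and that is not the actual mechanism nor an obviously workable one. In the paper's construction the pairs live entirely \emph{inside} the blocks and serve only as the fallback rule in an ambiguous middle zone; they provide no cross-block coordination at all. The real mechanism is twofold. First, each block $T_i$ thresholds on the number of red hats \emph{inside $T_i$}, not on the global imbalance $d$; this makes each block's guarantee relative to its own composition ($|R_\omega\cap T_i|$ or $|B_\omega\cap T_i|$, quantities which sum to the global ones), and this is what actually neutralizes the adversarial assignment. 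With thresholds on the global count, a block directed to play pairing because the \emph{global} count looks balanced can still consist almost entirely of majority-colour hats and so lose $|T_i|/2$ --- and this can happen in many blocks simultaneously. Second, the threshold pair $(a_i,b_i)$ of block $T_i$ is not fixed in advance but is computed by its members from the hats they see \emph{outside} $T_i$: $b_i$ is minimal with $b_i\ge\tfrac12|T_i|$ and $|R_\omega\cap\overline{T_i}|+b_i\equiv i \pmod k$, and $a_i=b_i-k-1$. Since $a_i+1\equiv b_i \pmod k$, block $i$ can fail only if $|R_\omega|\equiv i\pmod k$, so at most one block fails on any colouring. This modular, adaptive choice of thresholds is the heart of the proof and is absent from your proposal.

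A related conceptual error forecloses the solution from the start: your reduction ``the only datum available to player $i$ is $v_i=d-\varepsilon_i$'' is false. A player sees the entire colouring of the other $n-1$ players, and the strategy above genuinely uses the split of this information into the inside-$T_i$ count (for thresholding) and the outside-$T_i$ count (for \emph{choosing} the thresholds), not merely their sum; indeed the paper's strategy is not a function of $(i,v_i)$ alone. Finally, with a single threshold per block a player cannot even tell whether his block is the ``straddling'' one (he knows $d$ only up to $\pm 1$), so your pairing fallback cannot be triggered consistently; the paper needs the gap $b_i-a_i=k+1$ precisely so that the ambiguous middle zone is well defined by what each player sees, every non-failing block loses at most $O(k)$ rather than $|T_i|/2$, and the total loss $\tfrac12\max_i|T_i|+(k-1)^2\approx n/(2k)+k^2$ can then be optimized at $k=\lceil\sqrt[3]{n/4}\,\rceil$ to give the bound $1.2\,n^{2/3}+2$.
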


We also show that no strategy can provide a guarantee of better than
\mbox{$\max\{r,b\} - \Omega(\sqrt{n})$}.

Subsequent to the first version of this work, Uriel Feige~\cite{feige} gave an existential proof for a strategy producing $\max\{r,b\} - O(\sqrt n)$ correct guesses, but left open the problem whether there is a strategy of this quality such that all computations done by the players can be performed in polynomial time.

}

\section{Notation and the Pairing Strategy}

Let us assume from now on that $n$ is even unless otherwise stated. We shall
show that in this case there is a strategy ensuring $\max\{r,b\} - 1.2 n^{2/3}
-1$ correct guesses for any distribution of $r$ red and $b$ blue hats. This
yields Theorem~\ref{thm}.

\subsection{Notation}

Let the set of players simply be~$[n]$. Then a distribution of hats is an
$\omega$ from $\Omega := \{R, B\}^n$. Put $R_\omega := \{i \in [n] \setmid
\omega_i = R\}$ and $B_\omega := [n] \setminus R_\omega$. Formally, a
\emph{strategy} for the players is a function $S : \Omega \to \Omega$ such that
for all $i \in [n]$ the $i$-th player's guess $S(\omega)_i$ is independent of
$\omega_i$. For a strategy $S$ and $\omega \in \Omega$ put \mbox{$\cor(S, \omega) =
  |\{i \in [n] \setmid S(\omega)_i = \omega_i\}|$}, the number of correct
guesses produced by $S$ on $\omega$.

\subsection{Pairing Strategy}

We brief{}ly review from~\cite{winklergames} the strategy ensuring $\tfrac n 2$
correct guesses. Assume the set $[n]$ of players partitioned into ordered pairs,
i.e., there are $x_i, y_i \in [n]$, $i \in [n/2]$, such that $\{x_i, y_i \setmid
i \in [n/2]\} = [n]$. Assume further that this pairing is known to the players.
The \emph{pairing strategy}\/ with respect to this pairing is as follows: For all
$i \in [n/2]$, player $x_i$ calls the color of $y_i$'s hat, player $y_i$ calls
the opposite color of $x_i$'s hat.  Thus if $x_i$'s and $y_i$'s hat have the
same color, then $x_i$ guesses right and $y_i$ wrong. If their hat colors are
different, $y_i$'s guess is right and $x_i$'s is wrong. In particular, this
strategy ensures that exactly one player from each pair guesses right.

Note that the pairing is independent of the hat colors. Thus the player may
agree on the pairing prior to the guessing as part of their agreement on a
strategy. Doing so and playing the pairing strategy ensures $\tfrac n2$ correct
guesses.

\section{Probabilistic Analysis and Lower Bounds}

As indicated in the introduction, the pairing strategy not only guarantees that
$\tfrac n2$ players guess right, it also guarantees that that many players guess
wrong. This cannot be helped as can be seen from the following elementary probabilistic argument, which was already sketched in~\cite{winklergames}.

Assume that we pick a distribution of hats uniformly at random from $\Omega$,
i.e., we view $\Omega$ as a probability space with probability distribution $\Pr :
\Omega \to [0,1]$ defined by $\Pr(\omega) = \tfrac 1 {|\Omega|} = 2^{-n}$ for all
$\omega \in \Omega$. Then any strategy in expectation produces $\tfrac n2$
correct guesses.

\begin{lemma}\label{lavg}
  Let $S : \Omega \to \Omega$ be any strategy. Then the expected number of correct
  guesses produced by $S$ on a random hat distribution is $\tfrac n2$.
\end{lemma}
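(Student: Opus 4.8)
The plan is to compute the expectation by linearity, summing over the $n$ players the probability that each one guesses correctly, and to show that each such probability is exactly $\tfrac 12$.

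**First** I would fix a player $i \in [n]$ and pair up the hat distributions according to the color of the $i$-th hat. Concretely, for $\omega \in \Omega$ let $\omega^{(i)}$ denote the distribution obtained from $\omega$ by flipping the $i$-th coordinate (swapping $R$ and $B$ at position $i$, leaving all others unchanged). This is an involution on $\Omega$ without fixed points, so it partitions $\Omega$ into $2^{n-1}$ pairs $\{\omega, \omega^{(i)}\}$, and both elements of a pair carry the same probability $2^{-n}$.

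**Next** comes the one substantive observation: because a strategy $S$ has the property that $S(\omega)_i$ does not depend on $\omega_i$, we have $S(\omega)_i = S(\omega^{(i)})_i$. But $\omega_i \ne \omega^{(i)}_i$ since exactly the $i$-th coordinate was flipped. Hence in each pair $\{\omega, \omega^{(i)}\}$ player $i$ guesses correctly on exactly one of the two distributions: if the common guess $S(\omega)_i$ equals $\omega_i$ it is wrong on $\omega^{(i)}$, and vice versa. Summing over the $2^{n-1}$ pairs, player $i$ guesses correctly on exactly $2^{n-1}$ of the $2^n$ distributions, so $\Pr[S(\omega)_i = \omega_i] = \tfrac 12$.

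**Finally** I would assemble the pieces: by linearity of expectation,
\[
  \Exp[\cor(S,\omega)] = \sum_{i=1}^{n} \Pr[S(\omega)_i = \omega_i] = \sum_{i=1}^{n} \tfrac 12 = \tfrac n2 ,
\]
which is the claim. I do not anticipate a real obstacle here — the only point that needs care is the justification that $S(\omega)_i = S(\omega^{(i)})_i$, which is exactly the formal definition of a strategy given in the notation section (the $i$-th guess is independent of $\omega_i$); everything else is a routine pairing/averaging argument.
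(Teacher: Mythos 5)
Your proposal is correct and follows essentially the same route as the paper: linearity of expectation over the $n$ players, with each player guessing correctly with probability $\tfrac 12$. The only difference is that you spell out, via the coordinate-flipping involution, why $\Pr[S(\omega)_i=\omega_i]=\tfrac12$, a step the paper simply asserts; this is a welcome but minor elaboration.
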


\begin{proof}
  Define the following random variables. Denote by $X$ the number of correct
  guesses produced by $S$. For $i \in [n]$ let $X_i$ be $1$, if player $i$
  guesses correctly, and $0$ otherwise. Then $X = \sum_{i = 1}^n X_i$ and $\Pr(X_i
  = 1) = \Pr(X_i = 0) = \tfrac 12$. Thus $EX = \sum_{i = 1}^n EX_i = \tfrac n2$.
\end{proof}

Thus from the view-point of average case analysis, the game regarded is rather
boring. Lemma~\ref{lavg} has a nice combinatorial corollary.

\begin{corollary}\label{corcomb}
  For all even $n \in \N$,
  \[\sum_{{0 \le i \le n} \atop {i \neq n/2}} \binom{n}{i}
  \max\{i,n-i\} = 2^n \tfrac n 2.\]
\end{corollary}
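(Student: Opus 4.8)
The plan is to deduce this identity from Lemma~\ref{lavg} by feeding it the one strategy whose performance is governed by $\max\{i,n-i\}$, namely the majority strategy. First I would check that the majority strategy really is a strategy in the sense of the definition. Since $n$ is even, $n-1$ is odd, so when player~$i$ looks at the $n-1$ hats other than his own he never sees equally many red and blue hats; hence the rule ``guess the color you see more of'' defines a function $S:\Omega\to\Omega$, and $S(\omega)_i$ manifestly does not depend on $\omega_i$. Thus Lemma~\ref{lavg} applies to $S$.

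Next I would evaluate $\cor(S,\omega)$ as a function of $|R_\omega|$ alone. Suppose $|R_\omega|=i$. If $i>n/2$, then every red-hatted player sees $i-1\ge n/2$ red and $n-i\le n/2-1$ blue hats, so he guesses red and is correct, while every blue-hatted player sees $i>n/2$ red and $n-i-1<n/2$ blue hats, so he guesses red and is wrong; hence $\cor(S,\omega)=i=\max\{i,n-i\}$. By the symmetric argument $\cor(S,\omega)=n-i=\max\{i,n-i\}$ when $i<n/2$. If $i=n/2$, every player sees a strict majority (by exactly one hat) in the color opposite to his own, so everyone guesses wrong and $\cor(S,\omega)=0$.

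Now I would sum over all $\omega\in\Omega$, grouping the $2^n$ distributions by their number $i$ of red hats (there being $\binom{n}{i}$ of them), which gives
\[
\sum_{\omega\in\Omega}\cor(S,\omega)
 = \sum_{\substack{0\le i\le n\\ i\ne n/2}}\binom{n}{i}\max\{i,n-i\}\;+\;\binom{n}{n/2}\cdot 0 .
\]
On the other hand, Lemma~\ref{lavg} says that the average of $\cor(S,\omega)$ over the $2^n$ equally likely distributions is $\tfrac n2$, i.e.\ $\sum_{\omega\in\Omega}\cor(S,\omega)=2^n\tfrac n2$. Equating the two expressions yields the claimed identity.

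Every step here is a direct computation, so there is no serious obstacle; the only points deserving a moment's care are confirming that the majority strategy is well defined and tie-free — this is exactly where evenness of $n$ is used — and correctly accounting for the balanced case $i=n/2$, where the majority strategy contributes nothing to the sum.
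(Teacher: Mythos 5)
Your proof is correct and follows exactly the paper's route: apply Lemma~\ref{lavg} to the majority strategy and observe that its number of correct guesses on a distribution with $i$ red hats is $\max\{i,n-i\}$ when $i\neq n/2$ and $0$ when $i=n/2$. The paper states this in one line; you have merely supplied the (accurate) case analysis it leaves implicit.
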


\begin{proof}
  The expected number of correct guesses produced by the maximum
  strategy just is $2^{-n} \sum_{{0 \le i \le n} \atop {i \neq n/2}}
  \binom{n}{i} \max\{i,n-i\}$. Hence the claim follows from Lemma~\ref{lavg}.
\end{proof}

Another consequence of the lemma is that no strategy can ensure
$\max\{|R_\omega|,|B_\omega|\}$ correct guesses for all $\omega \in
\Omega$. More precisely, we obtain the following.

{\sloppy\begin{lemma}
  For any $n$, there is no strategy that produces more than
  \mbox{$\max\{|R_\omega|,|B_\omega|\} - \sqrt{n/(2\pi)} \exp(-1/(3n)) + 1$} correct
  guesses on all $\omega \in \Omega$.
\end{lemma}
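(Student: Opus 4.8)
The plan is to use the averaging lemma (Lemma~\ref{lavg}) as the starting point: any strategy $S$ has $2^{-n}\sum_{\omega\in\Omega}\cor(S,\omega)=n/2$, hence $\sum_{\omega\in\Omega}\cor(S,\omega)=2^{n-1}n$. On the other hand, if $S$ were to produce at least $\max\{|R_\omega|,|B_\omega|\}-g(n)$ correct guesses on every $\omega$, then summing that inequality over all $\omega\in\Omega$ gives $2^{n-1}n\ge\sum_{\omega}\max\{|R_\omega|,|B_\omega|\}-2^n g(n)$. So the whole statement reduces to getting a good lower bound on $\sum_{\omega\in\Omega}\max\{|R_\omega|,|B_\omega|\}=\sum_{i=0}^n\binom{n}{i}\max\{i,n-i\}$, and then reading off how large $g(n)$ must be. Concretely, I would show $\sum_{i=0}^n\binom{n}{i}\max\{i,n-i\}\ge 2^{n-1}n+2^{n-1}\bigl(\sqrt{n/(2\pi)}\,\exp(-1/(3n))-1\bigr)$-ish, which after dividing by $2^n$ forces $g(n)\ge\sqrt{n/(2\pi)}\,\exp(-1/(3n))-1$; since ``more than $\max\{r,b\}-g(n)$'' is being excluded, the $+1$ in the statement comes from the strictness of the inequality (integer-valued guesses). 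Note Corollary~\ref{corcomb} already tells us $\sum_{i\ne n/2}\binom{n}{i}\max\{i,n-i\}=2^{n-1}n$, so in fact the ``excess'' over $2^{n-1}n$ is supplied entirely by the $i=n/2$ term together with the difference between $\max\{i,n-i\}$ and $n/2$; rewriting $\max\{i,n-i\}=\tfrac n2+\tfrac12|n-2i|$ gives $\sum_{i=0}^n\binom{n}{i}\max\{i,n-i\}=2^{n-1}n+\tfrac12\sum_{i=0}^n\binom{n}{i}|n-2i|$, so everything hinges on the mean absolute deviation $\tfrac12\sum_i\binom{n}{i}|n-2i|=2^{n-1}\,\Exp|2\mathrm{Bin}(n,1/2)-n|$.

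So the core computation is the classical identity $\Exp\,|2\mathrm{Bin}(n,1/2)-n| = n\binom{n-1}{\lfloor (n-1)/2\rfloor}2^{-(n-1)}$ (the mean absolute deviation of a symmetric binomial), which I would prove by the standard telescoping/Abel-summation argument on $\sum_i\binom{n}{i}|n-2i|$, or by the de Moivre reflection. Plugging this in, the required bound becomes: $g(n)$ must be at least $\tfrac{n}{2}\binom{n-1}{\lfloor(n-1)/2\rfloor}2^{-(n-1)}$ (minus the $1$ and the strictness slack). The final step is to bound this binomial quantity from below by $\sqrt{n/(2\pi)}\,\exp(-1/(3n))$, which is exactly the kind of estimate one gets from the non-asymptotic Stirling bounds $\sqrt{2\pi m}\,(m/e)^m e^{1/(12m+1)} < m! < \sqrt{2\pi m}\,(m/e)^m e^{1/(12m)}$: applying these to $\binom{2k}{k}$ (even case $n=2k$ or $n-1=2k$) yields $\binom{2k}{k}\ge \tfrac{4^k}{\sqrt{\pi k}}e^{-1/(8k)}$-type bounds, and a short manipulation converts $\tfrac n2\binom{n-1}{\lfloor(n-1)/2\rfloor}2^{-(n-1)}$ into $\sqrt{n/(2\pi)}$ times a factor that one checks is at least $\exp(-1/(3n))$ for all $n$.

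The main obstacle is purely bookkeeping: handling the parity of $n$ cleanly (the paper mostly assumes $n$ even, but this lemma is stated ``for any $n$'', so $n-1$ or $n$ is even in the binomial coefficient depending on the case), and pushing the explicit Stirling constants so that the somewhat ad hoc $\exp(-1/(3n))$ really is a valid lower bound for every $n\ge 1$ rather than just asymptotically — that requires being a little generous in the Stirling error terms and possibly checking a couple of small cases by hand. None of this is deep; the conceptual content is entirely in the one-line averaging argument plus the mean-absolute-deviation identity, and the rest is making the constant honest.
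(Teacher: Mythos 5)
Your proposal is correct, and its core --- sum the assumed pointwise lower bound over all $\omega$, compare with the exact average $\tfrac n2$ from Lemma~\ref{lavg}, and lower-bound the resulting central binomial coefficient by Robbins's explicit Stirling estimate --- is exactly the paper's argument. Where you genuinely diverge is in how $\sum_{\omega}\max\{|R_\omega|,|B_\omega|\}$ is evaluated and, consequently, in the treatment of odd $n$. The paper, for even $n$, invokes Corollary~\ref{corcomb} (itself Lemma~\ref{lavg} applied to the majority strategy) to see that the excess over $2^n\tfrac n2$ is the single missing term $\binom{n}{n/2}\tfrac n2$; it then handles odd $n$ by a reduction, extending $S$ to $n+1$ players with a dummy who always guesses red and using $\max\{|R_{\omega'}|,|B_{\omega'}|\}\le\max\{|R_\omega|,|B_\omega|\}+1$ --- that reduction is precisely where the $+1$ in the statement is spent. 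You instead write $\max\{i,n-i\}=\tfrac n2+\tfrac12|n-2i|$ and use the de Moivre mean-absolute-deviation identity $E|2\mathrm{Bin}(n,\tfrac12)-n|=n\binom{n-1}{\lfloor(n-1)/2\rfloor}2^{-(n-1)}$, which coincides with the paper's excess for even $n$ (since $\binom{n-1}{n/2-1}=\tfrac12\binom{n}{n/2}$) but also covers odd $n$ directly. Your route costs an extra (standard) identity and a second Stirling computation for the odd-index central binomial coefficient, but it buys a uniform treatment of both parities and turns the $+1$ into pure slack: for odd $n$ you obtain an excess of $\sqrt{n^2/(2\pi(n-1))}\,\exp(-1/(3n-3))$, which already exceeds $\sqrt{n/(2\pi)}\,\exp(-1/(3n))$, so you could in fact drop the $+1$ there. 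The only point to make honest is the one you flagged yourself: checking that Robbins's two-sided error terms give $r_{2k}-2r_k>-1/(6k)=-1/(3n)$ for $n=2k$, which is exactly the bound $\binom{n}{n/2}\ge 2^n\sqrt{2/(\pi n)}\exp(-1/(3n))$ the paper cites. No gap; just more machinery than the even case strictly needs.
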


\begin{proof} Let first $n$ be even. Assume there is a strategy $S$ such
    that $\cor(S,\omega) \ge \max\{|R_\omega|,|B_\omega|\} - \sqrt{n/(2\pi)}
    \exp(-1/(3n))$ for all $\omega \in \Omega$. From Lemma~\ref{lavg}
    and Corollary~\ref{corcomb} we have
  \begin{eqnarray*}
    2^n \tfrac n 2 &=& \sum_{\omega \in \Omega} \cor(S,\omega) \\
    &>& \sum_{\omega \in \Omega} \left(\max\{|R_\omega|,|B_\omega|\} -
    \sqrt{n/(2\pi)} \exp(-1/(3n))\right) \\
    &=& \sum_{i = 0}^n \binom{n}{i} \max\{i,n-i\} - 2^n \sqrt{n/(2\pi)}
    \exp(-1/(3n)) \\
    &=& 2^n \tfrac n 2 + \binom{n}{n/2} \tfrac n2 - 2^n \sqrt{n/(2\pi)}
    \exp(-1/(3n)).
  \end{eqnarray*}
  Estimating \mbox{$\binom{n}{n/2} \ge 2^n \sqrt{2/(\pi n)}
    \exp(-1/(3n))$}, cf.\ e.g. Robbins~\cite{rob}, yields a
    contradiction.
    
    Now let $n$ be odd and $S$ any strategy. Extend $S$ to a strategy
    $S'$ for $n+1$ players by letting the \mbox{$(n+1)$-st} player
    always guess $R$ and all other players ignore the
    \mbox{$(n+1)$-st} player's hat. By the above, there is an $\omega'
    \in \{R,B\}^{n+1}$ such that $\cor(S',\omega') \le
    \max\{|R_{\omega'}|,|B_{\omega'}|\} - \sqrt{(n+1)/(2\pi)}
    \exp(-1/(3n+3))$.  For $\omega = (\omega_1, \ldots, \omega_n)$ we
    have
  \begin{eqnarray*}
    \cor(S,\omega) &\le& \cor(S',\omega') \\
    &\le& \max\{|R_{\omega'}|,|B_{\omega'}|\} - \sqrt{(n+1)/(2\pi)}
    \exp(-1/(3n+3)) \\
    &\le& \max\{|R_{\omega}|,|B_{\omega}|\} +1 - \sqrt{n/(2\pi)}
    \exp(-1/(3n)).
  \end{eqnarray*}
\end{proof}}

\section{An Explicit Strategy}

It seems that an easy solution to our problem is to play the pairing strategy,
if the distribution of hats is balanced, and the majority strategy otherwise. It
turns out that this does not work. The problem is that the pairing strategy
works well only if both players from each pair apply is. Thus we needed to
ensure that either all players apply the majority strategy or all apply the
pairing strategy. The problem with our initial idea is that depending on whether
he is wearing a majority color hat or not, the player regards the distribution
as more or less balanced. Thus is seems difficult do get the players organized.
We solve this problem as follows.

\subsection{Partial Strategies $S(T,a,b)$}

For our strategy we also assume that the players have agreed on a pairing as
above. We say that a subset $T \subseteq [n]$ of the players respects the
pairing, if no pair intersects $T$ non-trivially, i.e., if $\{x_i, y_i\}
\subseteq T$ or $\{x_i, y_i\} \cap T = \emptyset$ holds for all $i \in [n/2]$.

For such a subset $T$ and integers $a, b$ such that $a < \tfrac 12 |T|
\le b$ and $a + 2 \le b$, we define the following strategy $S(T,a,b)$ for
the players in $T$: If a player in $T$ can see at least $b$ red hats
in $T$, he guesses `red'. If he can see at most $a$ red hats in $T$,
he guesses `blue'.  Otherwise he guesses according to the pairing
strategy.

\begin{lemma}\label{leins}
  Let $\omega  \in \Omega$ be any distribution of hats
  and $m = \max\{|B_\omega \cap T|, |R_\omega \cap T|\}$. Then the strategy
  $S(T,a,b)$ produces at least $\cor(\omega,T,a,b)$ correct guesses, where
  \[\cor(\omega,T,a,b) := \left\{\begin{array}{ll}
      m  = |R_\omega \cap T|& \mbox{if } |R_\omega \cap T| > b, \\
      b - \tfrac 12 |T| = m - \tfrac 12 |T| & \mbox{if } |R_\omega \cap T| = b, \\
      \tfrac 12 |T| & \mbox{if } a+2 \le |R_\omega \cap T| \le b-1,\\
      \tfrac 12 |T| -a -1 =  m - \tfrac 12 |T| & \mbox{if } |R_\omega \cap T| = a+1,\\
      m = |B_\omega \cap T|& \mbox{if } |R_\omega \cap T| \le a.\end{array}
  \right.\]
\end{lemma}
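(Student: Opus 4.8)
The statement is a case analysis based on the value of $|R_\omega \cap T|$ relative to the thresholds $a$ and $b$. I would handle the five cases in turn, in each case counting how many players in $T$ guess correctly under $S(T,a,b)$. Throughout, write $\rho := |R_\omega \cap T|$ and $\beta := |B_\omega \cap T| = |T| - \rho$, and note that a player in $T$ wearing a red hat sees $\rho - 1$ red hats in $T$, while a player wearing a blue hat sees $\rho$ red hats in $T$. This single observation — that the two colour classes see counts differing by exactly one — is what drives every case.

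\textbf{The three extreme cases.} Suppose first $\rho > b$. Then every red-hatted player in $T$ sees $\rho - 1 \ge b$ red hats, hence guesses `red' correctly, giving $\rho = m$ correct guesses (the blue-hatted players may all guess wrong, but we only claim a lower bound). Symmetrically, if $\rho \le a$, then every blue-hatted player sees $\rho \le a$ red hats and guesses `blue' correctly, giving $\beta = m$ correct guesses. The middle case $a + 2 \le \rho \le b - 1$ is also immediate: now \emph{every} player in $T$ sees strictly between $a$ and $b$ red hats (a red-hatted player sees $\rho - 1 \ge a+1$ and $\le b - 2$; a blue-hatted player sees $\rho \ge a+2$ and $\le b-1$), so everyone follows the pairing strategy and exactly $\tfrac12 |T|$ players guess right, since $T$ respects the pairing and the pairing strategy yields one correct guess per pair.

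\textbf{The two boundary cases.} These are the only slightly delicate ones, because the two colour classes behave differently. Take $\rho = b$ (so $m = b$ since $b \ge \tfrac12 |T| > a \ge \beta$... more precisely $b = \rho \ge \tfrac12|T| \ge \beta$, using $a < \tfrac12|T| \le b$). A red-hatted player sees $\rho - 1 = b - 1$ red hats, which lies in the pairing range $[a+1, b-1]$, so he plays the pairing strategy; a blue-hatted player sees $\rho = b$ red hats and guesses `red' — incorrectly. So the only correct guesses come from pairs lying entirely inside the red class: there are $\rho - \beta = 2\rho - |T| = 2b - |T|$ players in such pairs wait — one must argue about pairs. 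Here is the clean way: among the $\tfrac12|T|$ pairs, say $p$ are monochromatic red, $q$ monochromatic blue, and the rest mixed; the pairing strategy gives one correct guess from each of the $\tfrac12|T| - p - q$ mixed pairs and from each of the $p$ red pairs (the $x_i$-player), and zero from blue pairs. But in case $\rho = b$ the blue players do \emph{not} play pairing, so mixed and blue pairs contribute nothing, while each red pair still contributes its pairing-correct player: that is $p \ge \rho - (\tfrac12|T| - p)$, hmm. Cleaner still: the red-hatted players number $\rho = b$ and they all play pairing among themselves — but the pairing restricted to the red class is a partial matching, and on any set of red-hatted players that forms complete pairs the pairing strategy gives one correct per pair. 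The red class contains $\rho - (\text{red players paired with blue})$; since only mixed pairs cost us and there are at most $|T| - \rho = \beta$ of them... I would phrase it as: the number of red players in monochromatic-red pairs is at least $\rho - \beta$... no: at least $2\lceil(\rho-\beta)/2\rceil$? Let me just say: red players not in a red pair are matched to blue players, at most $\beta$ of them, so at least $\rho - \beta$ red players sit in red pairs, i.e.\ at least $\lceil(\rho-\beta)/2\rceil \ge (\rho - \beta)/2 = b - \tfrac12|T| = m - \tfrac12|T|$ correct guesses. The case $\rho = a+1$ is the mirror image with colours swapped: blue players play pairing, $m = \beta = |T| - a - 1$, and one gets at least $(\beta - \rho)/2 = \tfrac12|T| - a - 1 = m - \tfrac12|T|$ correct guesses.

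\textbf{Main obstacle.} There is no deep obstacle; the only thing requiring care is the bookkeeping in the two boundary cases — verifying that when only one colour class follows the pairing strategy, the number of correct guesses is exactly (at least) the claimed $m - \tfrac12|T|$, which amounts to checking that a partial matching on a set of size $k$ with at most $t$ vertices matched outside still yields $\ge (k-t)/2$ "winning" vertices, together with confirming the side conditions $a < \tfrac12|T| \le b$ and $a+2 \le b$ guarantee the stated pairing range $[a+1,b-1]$ is nonempty and that the extreme-case thresholds are hit correctly. I would also double-check the edge identity $m = \tfrac12|T| + (m - \tfrac12|T|)$ makes the boundary-case values consistent with the claimed formulas, and that $\cor(\omega,T,a,b)$ is continuous across the case boundaries in the appropriate monotone sense.
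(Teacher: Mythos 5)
Your proposal is correct and follows essentially the same route as the paper: the same five-way case split driven by the observation that red-hatted players see one fewer red hat than blue-hatted ones, with the two boundary cases handled by counting monochromatic pairs whose both members still follow the pairing strategy (at least $2b-|T|$ red players in red pairs when $|R_\omega\cap T|=b$, and symmetrically for $|R_\omega\cap T|=a+1$). The only difference is presentational: the paper states the pair-counting argument directly, whereas you arrive at it after some visible back-and-forth, but the final bookkeeping agrees with the paper's.
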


\begin{proof}
  If $|R_\omega \cap T| > b$, then all players can see at least $b$ red hats.
  Thus they all guess `red' and $m = |R_\omega \cap T|$ of them naturally are
  right.  If $|R_\omega \cap T| = b$, then those players wearing a blue hat can
  see $b$ red ones and (wrongly) guess `red', whereas the players wearing a red
  hat guess according to the pairing strategy. Since there are only $|T| - b$
  blue hats, at least $2b - |T|$ players wearing a red hat have a partner
  wearing a red hat as well.  Hence from at least $b - \tfrac12 |T|$ pairs
  both partners  guess according to the pairing strategy, producing one correct
  guess (and one false one) per pair.

  If $|R_\omega \cap T| \in \{a+2, \ldots, b-1\}$, then all players guess
  according to the pairing strategy, which yields $\tfrac 12 |T|$ correct
  guesses. If $|R_\omega \cap T| = a+1$, the players wearing a red hat can see
  only $a$ red hats and thus (wrongly) guess `blue'.  The $m$ players wearing a
  blue hat can see $a+1$ red hats and hence guess according to the pair
  strategy. As above, this produces at least $\tfrac 12 |T| -a -1$ correct
  guesses. If $|R_\omega \cap T| \le a$, all players guess `blue', $m =
  |B_\omega \cap T|$ of them being correct.
\end{proof}

\subsection{A Strategy for all Players}

The strategy $S(T,a,b)$ is not bad unless there are exactly $a+1$ or
$b$ red hats in $T$. In this case we say that $S(T,a,b)$ \emph{fails}.
Our plan is to partition the set of all players $[n]$ into $k \ge 2$
subsets $T_1, \ldots, T_k$ respecting the pairing and choose integers
$a_i, b_i$ for all $i \in [k]$ in such a way that at most one strategy
$S(T_i,a_i,b_i)$ fails.

Assume the partition $[n] = T_1 \dot\cup \ldots \dot\cup T_k$ be given and known
to the players. For all $i \in [k]$ put $\overline T_i = [n] \setminus T_i$. Let
$b_i$ be minimal subject to $b_i \ge \tfrac 12 |T_i|$ and
\[\la R_\omega \cap \overline T_i\ra + b_i \equiv i \mmod k.\]
Note that each player in $T_i$ can compute this number as he only needs to know
the number of red hats in $[n] \setminus T_i$. Put $a_i = b_i - k-1$.

\begin{lemma}\label{lfail}
  Let $\omega  \in \Omega$ be any distribution of hats and $T_i, a_i,
  b_i$ as above. Let $i \in [k]$ such that $i \equiv |R_\omega| \mmod k$. Then no
  strategy $S(T_j, a_j, b_j)$, $j \neq i$, fails.  $S(T_i,a_i,b_i)$ fails if and
  only if $|R_\omega \cap T_i| \in \{a_i+1, b_i\}$.
\end{lemma}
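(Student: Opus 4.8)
The plan is to unwind the definitions of $b_i$ and $a_i$ and track the quantity $|R_\omega \cap T_j|$ through the congruence condition. First I would record the two identities that hold for every $j \in [k]$: by definition $|R_\omega \cap \overline T_j| + b_j \equiv j \pmod k$, and since $[n] = T_j \dot\cup \overline T_j$ we have $|R_\omega \cap \overline T_j| = |R_\omega| - |R_\omega \cap T_j|$. Subtracting, this gives
\[
  |R_\omega| - |R_\omega \cap T_j| + b_j \equiv j \pmod k,
  \qquad\text{i.e.}\qquad
  |R_\omega \cap T_j| \equiv |R_\omega| + b_j - j \pmod k.
\]
Recall from Lemma~\ref{leins} and the definition of ``fails'' that $S(T_j,a_j,b_j)$ fails exactly when $|R_\omega \cap T_j| \in \{a_j+1, b_j\}$; and since $a_j = b_j - k - 1$ we have $a_j + 1 = b_j - k \equiv b_j \pmod k$. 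So in \emph{both} failure cases $|R_\omega \cap T_j| \equiv b_j \pmod k$, which by the displayed congruence is equivalent to $|R_\omega| \equiv j \pmod k$.

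Now I would split into the two cases. If $j \neq i$, then $j \not\equiv |R_\omega| \pmod k$ (since $i$ is by hypothesis the unique residue in $[k]$ congruent to $|R_\omega|$), so the congruence $|R_\omega \cap T_j| \equiv b_j \pmod k$ fails; hence $|R_\omega \cap T_j| \notin \{a_j+1, b_j\}$ and $S(T_j,a_j,b_j)$ does not fail. This needs the mild observation that $a_j + 1 = b_j - k$ and $b_j$ are genuinely distinct integers (they differ by $k \ge 2$), so ``$|R_\omega \cap T_j|$ is not $\equiv b_j$'' already excludes both. If $j = i$, the congruence is satisfied, but this only tells us $|R_\omega \cap T_i|$ lies in the residue class of $b_i$ modulo $k$; whether it actually equals $a_i + 1$ or $b_i$ is a genuine coincidence that may or may not happen, so here we can only state the ``if and only if'' as in the claim — there is nothing further to prove, it is literally the definition of failure specialized to $T_i$.

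I would also check the side conditions needed for $S(T_i,a_i,b_i)$ to be a well-defined strategy in the sense of Section~4.1, namely $a_i < \tfrac12|T_i| \le b_i$ and $a_i + 2 \le b_i$: the second inequality $b_i \ge \tfrac12|T_i|$ is part of the definition of $b_i$, the relation $a_i = b_i - k - 1$ with $k \ge 2$ gives $a_i \le b_i - 3 < \tfrac12|T_i|$ and $a_i + 2 = b_i - k + 1 \le b_i - 1 < b_i$, so all constraints hold. The only real obstacle is bookkeeping the congruences correctly and noticing the key collapse $a_j + 1 \equiv b_j \pmod k$ that makes both failure modes land in the same residue class — once that is in hand the lemma is immediate.
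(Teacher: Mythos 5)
Your argument is correct and is essentially the paper's own proof: both hinge on the observation that $a_j+1 \equiv b_j \pmod k$, so either failure value of $|R_\omega \cap T_j|$ forces $|R_\omega| \equiv j \pmod k$, which can hold for at most one $j \in [k]$, namely $j = i$. (One small slip in your optional side-condition check: $b_i - 3 < \tfrac12|T_i|$ does not follow from $k \ge 2$ alone; you need the minimality of $b_i$, which gives $b_i < \tfrac12|T_i| + k$ and hence $a_i = b_i - k - 1 < \tfrac12|T_i| - 1$.)
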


\begin{proof}
  Let $j \in [k]$ such that $S(T_j,a_j,b_j)$ fails. Then $|R_\omega \cap T_j|
  \in \{a_j+1,b_j\}$. Note that $a_j + 1 \equiv b_j \mmod k$ by definition. Hence
  \[|R_\omega| = \la R_\omega \cap \overline T_j \ra + |R_\omega \cap T_j| \equiv
  \la R_\omega \cap \overline T_j \ra + b_j \equiv j \mmod k.\] Thus at most one
  strategy may fail, namely the strategy $S(T_i,a_i,b_i)$. This happens if and
  only if \mbox{$|R_\omega \cap T_i| \in \{a_i+1, b_i\}$}.
\end{proof}

Let $S$ be the union of the strategies $S(T_i,a_i,b_i)$, $i \in [k]$, i.e., the
strategy such that a player contained in $T_i$ follows the strategy
$S(T_i,a_i,b_i)$.

{\sloppy\begin{lemma} For all $\omega \in \Omega$, \[\cor(S,\omega) \ge
    \max\{|R_\omega|,|B_\omega|\} - \tfrac 12 \max_{i \in [k]} |T_i| - (k-1)^2.\]
\end{lemma}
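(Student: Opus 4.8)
The plan is to sum the guarantees of the partial strategies $S(T_i,a_i,b_i)$ given by Lemma~\ref{leins} over all $i\in[k]$, using Lemma~\ref{lfail} to control how badly things can go wrong. Fix $\omega\in\Omega$ and let $i_0\in[k]$ be the unique index with $i_0\equiv|R_\omega|\pmod k$, so that by Lemma~\ref{lfail} every strategy $S(T_j,a_j,b_j)$ with $j\neq i_0$ does not fail. For such a non-failing $j$, Lemma~\ref{leins} shows that $S(T_j,a_j,b_j)$ produces at least $\max\{|R_\omega\cap T_j|,|B_\omega\cap T_j|\}$ correct guesses in the two ``majority'' cases ($|R_\omega\cap T_j|>b_j$ or $\le a_j$), and at least $\tfrac12|T_j|$ correct guesses in the middle case ($a_j+2\le|R_\omega\cap T_j|\le b_j-1$); in all three cases this is at least $\max\{|R_\omega\cap T_j|,|B_\omega\cap T_j|\}-\tfrac12|T_j|$, since $\max\{|R_\omega\cap T_j|,|B_\omega\cap T_j|\}\le|T_j|$ always, and $\tfrac12|T_j|\ge\max-\tfrac12|T_j|$ is just $\max\le|T_j|$. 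Hence every non-failing block contributes at least $\max\{|R_\omega\cap T_j|,|B_\omega\cap T_j|\}-\tfrac12|T_j|$.

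Next I would handle the possibly-failing block $T_{i_0}$. Even when $S(T_{i_0},a_{i_0},b_{i_0})$ fails, Lemma~\ref{leins} (the cases $|R_\omega\cap T_{i_0}|=b_{i_0}$ and $|R_\omega\cap T_{i_0}|=a_{i_0}+1$) still guarantees at least $m-\tfrac12|T_{i_0}|$ correct guesses, where $m=\max\{|R_\omega\cap T_{i_0}|,|B_\omega\cap T_{i_0}|\}$; and in the non-failing cases it does at least as well, so the bound $\max\{|R_\omega\cap T_{i_0}|,|B_\omega\cap T_{i_0}|\}-\tfrac12|T_{i_0}|$ holds for $T_{i_0}$ too. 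Therefore, writing $M:=\max_{i\in[k]}|T_i|$ and summing over all $i\in[k]$,
\[
\cor(S,\omega)\ \ge\ \sum_{i=1}^k\Bigl(\max\{|R_\omega\cap T_i|,|B_\omega\cap T_i|\}-\tfrac12|T_i|\Bigr)\ \ge\ \sum_{i=1}^k\max\{|R_\omega\cap T_i|,|B_\omega\cap T_i|\}-\tfrac{k}{2}M.
\]
Now I need a lower bound on $\sum_{i=1}^k\max\{|R_\omega\cap T_i|,|B_\omega\cap T_i|\}$ in terms of $\max\{|R_\omega|,|B_\omega|\}$. Say WLOG $|R_\omega|\ge|B_\omega|$; then $\sum_i\max\{|R_\omega\cap T_i|,|B_\omega\cap T_i|\}\ge\sum_i|R_\omega\cap T_i|=|R_\omega|=\max\{|R_\omega|,|B_\omega|\}$, which gives $\cor(S,\omega)\ge\max\{|R_\omega|,|B_\omega|\}-\tfrac{k}{2}M$. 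This is close but the claimed error term is $\tfrac12 M+(k-1)^2$, not $\tfrac k2 M$, so a cruder bound on the blocks is too lossy: I must be more careful and only spend the ``$-\tfrac12|T_i|$'' loss on the blocks that genuinely need it.

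The key refinement — and I expect this to be the main obstacle — is that a block $T_i$ incurs the full $-\tfrac12|T_i|$ loss only when it is in a middle case or is the failing block, whereas a block in a genuine majority case contributes its full local maximum $\max\{|R_\omega\cap T_i|,|B_\omega\cap T_i|\}$ with no loss. For a block $T_j$ ($j\neq i_0$) in the middle case, $a_j+2\le|R_\omega\cap T_j|\le b_j-1$, and since $b_j=a_j+k+1$ this forces $\max\{|R_\omega\cap T_j|,|B_\omega\cap T_j|\}-\tfrac12|T_j|\le b_j-1-\tfrac12|T_j|\le\tfrac12(k-1)$ — wait, more usefully, $|R_\omega\cap T_j|$ ranges over an interval of length $k-2$, so the ``waste'' $\tfrac12|T_j|-\bigl(\text{local contribution}\bigr)$ is at most about $k$ per middle block, and moreover a middle block's local max exceeds $\tfrac12|T_j|$ by at most $\tfrac{k-1}{2}$. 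Combining: each of the (at most $k-1$) non-failing blocks wastes at most $\tfrac{k-1}{2}+$ (a correction), and the single failing block $T_{i_0}$ wastes at most $\tfrac12|T_{i_0}|\le\tfrac12 M$. I would organize this as: $\cor(S,\omega)\ge\sum_i\max\{|R_\omega\cap T_i|,|B_\omega\cap T_i|\}-(\text{waste})\ge\max\{|R_\omega|,|B_\omega|\}-\tfrac12 M-(\text{sum of per-block waste over the }k-1\text{ non-failing blocks})$, and then check that the per-block waste sums to at most $(k-1)^2$ — each middle block wastes at most $k-1$ (half-integer bookkeeping, using $b_j-\tfrac12|T_j|\le\tfrac{k}{2}$ from minimality of $b_j$ together with $b_j\ge\tfrac12|T_j|$), and a majority block wastes $0$, giving $\le(k-1)(k-1)=(k-1)^2$ total. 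The delicate point throughout is the interplay between the minimality defining $b_i$ (which keeps $b_i$ within $k$ of $\tfrac12|T_i|$), the spacing $b_i-a_i=k+1$, and parity/integrality of the half-sizes; I would track these carefully rather than using the blunt $-\tfrac12|T_i|$ bound on every block.
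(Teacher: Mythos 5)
Your refined plan is correct and is in substance the paper's own argument: sum the per-block guarantees of Lemma~\ref{leins}, use Lemma~\ref{lfail} to isolate the single possibly-failing block (charged $\tfrac12|T_{i_0}|\le\tfrac12\max_i|T_i|$), and charge each of the other $k-1$ blocks at most $k-1$, using that minimality of $b_j$ gives $b_j\le\tfrac12|T_j|+k-1$ and that $a_j=b_j-k-1$. The one genuine difference is bookkeeping: you compare each block's output to its \emph{local} maximum $\max\{|R_\omega\cap T_j|,|B_\omega\cap T_j|\}$ and then use $\sum_j\max\{|R_\omega\cap T_j|,|B_\omega\cap T_j|\}\ge\max\{|R_\omega|,|B_\omega|\}$, whereas the paper compares each block to the count of the \emph{global} majority color in that block and therefore splits into the cases $|R_\omega|\ge|B_\omega|$ and $|R_\omega|<|B_\omega|$ (getting $(k-1)(k-2)$ in the first and $(k-1)^2$ in the second). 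Your symmetric version avoids that case split and is arguably cleaner. Two intermediate claims you state are false, though harmless: minimality gives $b_j-\tfrac12|T_j|\le k-1$, not $\le\tfrac k2$, and a middle block's local maximum can exceed $\tfrac12|T_j|$ by as much as $k-1$ (via the blue side, $|B_\omega\cap T_j|\le|T_j|-a_j-2=\tfrac12|T_j|+(\tfrac12|T_j|-b_j)+k-1\le\tfrac12|T_j|+k-1$), not by at most $\tfrac{k-1}{2}$; your final tally of $k-1$ waste per non-failing block and $(k-1)^2$ total is nevertheless exactly right.
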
}

\begin{proof}
  {\sloppy Let $i \in [k]$ such that $|R_\omega| \equiv i \mmod k$.
    Assume that \mbox{$\max\{|R_\omega|,|B_\omega|\} = |R_\omega|$}.
    Let $j \neq i$. Then $S(T_j,a_j,b_j)$ does not fail by
    Lemma~\ref{lfail}. From Lemma~\ref{leins} we conclude that if
    $|R_\omega \cap T_j| \in \{a_j+2, \ldots, b_j -1\}$, then
    \begin{eqnarray*}
      \cor(\omega,T_j,a_j,b_j) 
      &=& \tfrac 12 |T_j| = |R_\omega \cap T_j| - (|R_\omega \cap  T_j| - \tfrac 12 |T_j|)\\ 
      &\ge& |R_\omega \cap T_j| - (b_j - 1 - \tfrac 12 |T_j|) \ge |R_\omega 
      \cap T_j| - (k-2).
    \end{eqnarray*}
    If $|R_\omega \cap T_j| > b_j$, then $\cor(\omega,T_j,a_j,b_j) =
    |R_\omega \cap T_j|$, and if $|R_\omega \cap T_j| \le a_j$, then
    $\cor(\omega,T_j,a_j,b_j) = |B_\omega \cap T_j| \ge |R_\omega \cap
    T_j|$. Hence in all cases we have $\cor(\omega,T_j,a_j,b_j)\ge
    |R_\omega \cap T_j| - (k-2)$. The possibly failing strategy
    $S(T_i,a_i,b_i)$ yields $\cor(\omega,T_i,a_i,b_i) = \max\{|B_\omega
    \cap T_i|, |R_\omega \cap T_i|\} - \tfrac 12 |T_i| \ge |R_\omega
    \cap T_i| - \tfrac 12 |T_i|$ correct guesses.

  Thus the total number of correct guesses is
  \begin{eqnarray*}
    \cor(S,\omega) &\ge& |R_\omega \cap T_i| - \tfrac 12 |T_i| + \sum_{j \in [k] \setminus
      \{i\}} \left(|R_\omega \cap T_j| - (k-2)\right) \\
    &\ge& |R_\omega| - \tfrac 12 |T_i| - (k-1)(k-2).
  \end{eqnarray*}

  Assume now that $|R_\omega| < |B_\omega|$. For $j \neq i$, $S(T_j,a_j,b_j)$
  does not fail.  We have $\cor(\omega,T_j,a_j,b_j) = |R_\omega \cap T_j| >
  |B_\omega \cap T_j|$, if $|R_\omega \cap T_j| > b_j$, and
  $\cor(\omega,T_j,a_j,b_j) = |B_\omega \cap T_j|$, if $|R_\omega \cap T_j| \le
  a_j$.  If $|R_\omega \cap T_j| \in \{a_j+2, \ldots, b_j -1\}$, then
  \begin{eqnarray*}
    \cor(\omega,T_j,a_j,b_j) &=& \tfrac 12 |T_j| = |B_\omega \cap T_j| - (|B_\omega \cap T_j| -
  \tfrac 12 |T_j|)\\
  &\ge& |B_\omega \cap T_j| - (|T_j| - (a_j+2) - \tfrac 12 |T_j|) \\
  &\ge& |B_\omega \cap T_j| - (k-1).
  \end{eqnarray*}
  Hence $\cor(\omega,T_j,a_j,b_j)\ge |B_\omega \cap T_j| - (k-1)$ for all $j \neq
  i$. Together with $\cor(\omega,T_i,a_i,b_i) \ge |B_\omega \cap T_i| - \tfrac
  12|T_i|$, we conclude
  \begin{eqnarray*}
    \cor(S,\omega) &\ge& |B_\omega \cap T_i| - \tfrac 12 |T_i| + \sum_{j \in [k] \setminus
      \{i\}} \left(|B_\omega \cap T_j| - (k-1)\right) \\
    &\ge& |B_\omega| - \tfrac 12 |T_i| - (k-1)^2.
  \end{eqnarray*}
  This proves the claim.}
\end{proof}

\subsection{Optimizing the Partition}

It remains to choose a suitable partition $[n] = T_1 \dot\cup \ldots \dot\cup
T_k$. Let $k = \lc \sqrt[3]{n/4} \rc$. For any number $r \in \R$ denote by $\lc
r \rc_2$ the smallest even integer not smaller than $r$, and by $\lf r \rf_2$
the largest even integer not exceeding $r$.  Choose $\ell \in [k]$ such that $n
= \ell \lc n/k \rc_2 + (k-\ell) \lf n/k \rf_2$ --- recall that we assumed $n$ to be
even. Let $[n] = T_1 \dot\cup \ldots \dot\cup T_k$ be such that $|T_i| = \lc n/k
\rc_2$ for $i \in [\ell]$ and $|T_i| = \lf n/k \rf_2$ for $i \in [\ell
+ 1 \, .. \, k]$ and such that all $T_i$ respect our initially chosen pairing. Then
the loss compared to $\max\{|R_\omega|,|B_\omega|\}$ as given by the previous
lemma is at most
\begin{eqnarray*}
  \tfrac 12 \lc n/k \rc_2 + (k-1)^2 &\le&
1 + \tfrac 1 2 \sqrt[3]{4} n^{2/3} + \tfrac 1 {\sqrt[3]{16}} n^{2/3}
\le 1 + 1.2 n^{2/3}.
\end{eqnarray*}
This proves Theorem~\ref{thm}.

\end{document}